\date{}
\numberwithin{equation}{section}
\newtheorem{theo}{Theorem}[section]
\newtheorem{prop}{Proposition}[section]
\newtheorem{lem}{Lemma}[section]
\newtheorem{de}{Definition}[section]
\newtheorem{rem}{\textup{\textbf{Remark}}}[section]
\numberwithin{figure}{section}
\newcommand{\br}{{\mathbb{R}}}
\newcommand{\bc}{{\mathbb{C}}}
\newenvironment{proof}
	{\textit{Proof.}}
	{\hfill $\square$\vskip 8pt}
\title{Asymptotic estimates of solutions to time-fractional diffusion equations with space-dependent variable order}
\author{
Yavar {\sc Kian}$\footnote{Aix Marseille Univ., Universit\'e de Toulon, CNRS, CPT, Marseille, France, 
yavar.kian@univ-amu.fr}$, 
Diomba {\sc Sambou}$\footnote{Departamento de Matem\'aticas, Facultad de 
Matem\'aticas, Pontificia Universidad Cat\'olica de Chile, Vicu\~na Mackenna 
4860, Santiago de Chile, disambou@mat.uc.cl}$,
and \'Eric {\sc Soccorsi}$\footnote{Aix Marseille Univ., Universit\'e de Toulon, CNRS, CPT, Marseille, France, eric.soccorsi@univ-amu.fr}$
}
\begin{document}

\maketitle

\begin{abstract} 
We examine the short and long-time behaviors of time-fractional diffusion equations with variable space-dependent order. More precisely, we describe the time-evolution of the solution to these equations as the time parameter goes either to zero or to infinity.
\end{abstract}

\bigskip

\noindent
\textbf{Mathematics subject classification 2010:} 35R11, 35B35, 35B38.

\bigskip

\noindent
\textbf{Keywords:} Time-fractional diffusion equations of space-dependent variable order, time-asymptotic behavior


\section{Introduction}



Let $K$ be a compact subset of $\br^d$ with $d \geq 2$, and let  $\kappa \in L^\infty(K,\br_+$. More precisely, we assume that $\kappa : K  \to [\alpha_m,\alpha_M]$, where $\alpha_m$ and $\alpha_M$ are two fixed constants such that $0<\alpha_m<\alpha_M<1$. Given $\alpha_* \in [\alpha_m,\alpha_M]$, we put
\begin{equation}
\label{eq:ha2}
\alpha(x) = \begin{cases} 
\kappa(x) & \text{if}\ x \in K, \\ 
\alpha_* & \text{if}\ x \in \br^d \setminus K,
\end{cases}
\end{equation}
in such a way that $\alpha \in L^\infty(\br^d)$ verifies
\begin{equation}
\label{eq:ha1}
0 < \alpha_m \le \alpha(x) \le \alpha_M < 1, \quad x \in \br^d.
\end{equation}

Given $u_0 : \br^d \to \mathbb{C}$, we consider the initial value problem (IVP) for the time-fractional diffusion equation with variable order (VO) $\alpha(x)$, 
\begin{equation}
\label{eq:vp}
\begin{cases} 
\partial_t^{\alpha(x)} u(t,x) - \Delta u(t,x) = 0, & (t,x) \in (0,+\infty) \times \mathbb{R}^d, \\ 
u(0,x) = u_0(x), & x \in \mathbb{R}^d.
\end{cases}
\end{equation}
Here $\partial_t^{\alpha(x)}$ denotes the Caputo fractional derivative with respect to $t$, of order $\alpha(x)$, defined by 
$$
\partial_t^{\alpha(x)} u(t,x) := \frac{1}{\Gamma(1 - \alpha(x))} \int_0^t (t - s)^{-\alpha(x)}
\partial_s u(s,x) ds, \quad (t,x) \in (0,+\infty) \times \mathbb{R}^d,
$$ 
where $\Gamma$ is the usual Gamma function. The existence and uniqueness of the solution to \eqref{eq:vp} in a bounded domain of $\br^d$ was proved in \cite{ksy} for suitable initial states $u_0$. The main purpose of this paper is to extend the above mentioned existence and uniqueness result to $\br^d$, and then describe the time asymptotic behavior of the solution to \eqref{eq:vp} as $t\to0$ and $t\to+\infty$.


\subsection{Motivations}

Anomalous diffusion in complex media has attracted much attention from the scientific community in the last decade, with many  applications in geophysics, environmental 
science and biology. The diffusion features of homogeneous media are 
often modeled by constant order (CO) 
time-fractional diffusion equations, corresponding to \eqref{eq:vp} with a constant function
$x \mapsto \alpha(x)$, see e.g. \cite{AG,CSLG}.

However, in several complex media where space inhomogeneous variations are generated by the presence of 
heterogeneous regions, the CO fractional dynamic models are no longer robust for long times, see \cite{FS}. In this context, the VO time-fractional models are more relevant for describing the space-dependent anomalous diffusion process, see e.g. \cite{SCC}. As a matter of fact, several VO diffusion models have been successfully applied to numerous areas of applied sciences and engineering, such as chemistry \cite{CZZ}, 
rheology \cite{SdV}, biology \cite{GN}, hydrogeology \cite{AON} and physics \cite{SS, ZLL}. 
We point out that the VO time-fractional kinetic equation is frequently considered as a macroscopic model to continuous time random walk (CRTW)  associated with stochastic diffusion processes with space-dependent diffusion coefficient, and we refer to \cite{ORT,RT} for the derivation of VO time-fractional diffusion models from a CTRW scheme.

The IVP  under consideration in this paper models anomalous diffusion in the whole space $\br^d$, with inhomogeneous perturbation induced by heterogeneous regions located in the compact subset $K$. In accordance with the above paragraph, the space inhomogeneous variations are described by the {\it a priori} non constant fractional power $\alpha$. The main purpose of this article is to describe how compactly supported inhomogeneous variations of the diffusion space affect short and long time behaviors of the diffusion process.


\subsection{Time-fractional diffusion equations: A short review of the mathematical literature}

The analysis of ordinary or partial differential equations with time-fractional derivatives has been growing in interest among the mathematics community over the last decade. Without being exhaustive, we refer to \cite{KST,P, SY} for an introduction to the mathematical approach of time-fractional differential equations. 

While constant order and distributed order (DO) fractional diffusion equations have been extensively studied (see e.g. \cite{KY,LKS, SY}) by several authors, the reference \cite{ksy} is, as far as we know, the only mathematical paper dedicated to the theoretical study of VO time-fractional diffusion equations. More specifically, the time asymptotic behavior of the solution to time-fractional differential equations in a bounded spatial domain was examined in \cite{SY} for CO fractional diffusion equations, and in \cite{LLY} for DO fractional diffusion equations. In this paper we aim to make the same for VO fractional diffusion equations in $\br^d$.


\subsection{Main result, brief comments and outline}

The main result of this article can be stated as follows.

\begin{theo}
\label{t1} 
Fix $s \in (\max(d,4),+\infty)$, let $s'\in(1,+\infty)$ be such that $\frac{1}{s}+\frac{1}{s'}=1$, and pick $u_0\in L^2(\br^d)\cap L^{\frac{2s'}{2s'-1}}(\br^d)$. Assume that $\alpha_* \in [\alpha_m,\alpha_M]$ fulfills $\alpha_* \left(1-\frac{d}{s} \right)<\alpha_m$ and that $\alpha \in L^\infty(\br^d)$ verifies \eqref{eq:ha2}-\eqref{eq:ha1}. Then there exists a constant $C>0$, depending only on $d$, $s$, $K$, $\alpha_m$, $\alpha_M$ and $\alpha_*$, such that the solution $u$ to \eqref{eq:vp} satisfies the two following estimates:
 \begin{equation}
\label{t1a} 
\|u(t,\cdot)\|_{L^2(\br^d)}\leq C\left(\|u_0\|_{L^2(\br^d)}+\|u_0\|_{L^{\frac{2s'}{2s'-1}}(\br^d)}\right) t^{-\left(\alpha_m-\alpha_* \left( 1-\frac{d}{s} \right) \right)},\ t \in (1,+\infty),
\end{equation}
and
\begin{equation}
\label{t1b} 
\|u(t,\cdot)\|_{L^2(\br^d)}\leq C\|u_0\|_{L^2(\br^d)}t^{\alpha_m-\alpha_M},\  t\in(0,1].
\end{equation}
\end{theo}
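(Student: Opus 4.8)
The plan is to pass to the Laplace transform in time, solve the resulting $p$-family of elliptic problems through resolvent estimates, and then recover $u$ by an inverse Laplace (Hankel) contour integral whose radius is tuned to $t$. Denoting by $\hat u(p,\cdot)$ the Laplace transform of $u(\cdot,x)$ and using that the Caputo derivative transforms as $\mathcal L[\partial_t^{\alpha(x)}u](p,x)=p^{\alpha(x)}\hat u(p,x)-p^{\alpha(x)-1}u_0(x)$, the IVP \eqref{eq:vp} becomes $(p^{\alpha(\cdot)}-\Delta)\hat u(p,\cdot)=p^{\alpha(\cdot)-1}u_0$, where $p^{\alpha(\cdot)}$ is the multiplication operator by $x\mapsto p^{\alpha(x)}$. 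Writing $A_p:=p^{\alpha(\cdot)}-\Delta$ I would fix a sector $\Sigma_\theta=\{p\in\bc\setminus\{0\}:\ |\arg p|<\theta\}$ with $\pi/2<\theta<\pi/(2\alpha_M)$, check that $A_p$ is boundedly invertible on $L^2(\br^d)$ for $p\in\Sigma_\theta$, and obtain $\hat u(p,\cdot)=A_p^{-1}(p^{\alpha(\cdot)-1}u_0)$; this reproves and extends to $\br^d$ the well-posedness established in \cite{ksy}. The solution is then represented as $u(t,\cdot)=\frac{1}{2\pi i}\int_{\Gamma}e^{pt}A_p^{-1}(p^{\alpha(\cdot)-1}u_0)\,dp$, where $\Gamma$ is a Hankel contour made of the two rays $\{re^{\pm i\theta}:\ r\ge\delta\}$ and the arc $\{\delta e^{i\phi}:\ |\phi|\le\theta\}$, with radius tuned to $\delta=1/t$.

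The core of the argument consists of two resolvent bounds, valid uniformly for $p\in\Sigma_\theta$. The first is a coercivity estimate: from $\langle A_pu,u\rangle=\int_{\br^d}p^{\alpha(x)}|u(x)|^2\,dx+\|\nabla u\|_{L^2(\br^d)}^2$ and the lower bound $\operatorname{Re}(p^{\alpha(x)})=|p|^{\alpha(x)}\cos(\alpha(x)\arg p)\ge\cos(\alpha_M\theta)\,|p|^{\alpha(x)}>0$, one gets $\|A_p^{-1}\|_{L^2\to L^2}\lesssim|p|^{-\alpha_m}$ for $|p|\ge1$ and $\lesssim|p|^{-\alpha_M}$ for $|p|\le1$. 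The second, and more delicate, is a smoothing estimate $\|A_p^{-1}\|_{L^{q}\to L^2}\lesssim|p|^{-\alpha_*(1-\frac ds)}$ for $q=\frac{2s'}{2s'-1}$ and small $|p|$. To this end I exploit that $\alpha\equiv\alpha_*$ off the compact set $K$ and split $A_p=(p^{\alpha_*}-\Delta)+V_p$ with $V_p:=(p^{\alpha(\cdot)}-p^{\alpha_*})\mathbf 1_K$ a compactly supported multiplication obeying $\|V_p\|_{L^\infty}\lesssim|p|^{\alpha_m}$ for $|p|\le1$. The constant-order resolvent $R_p:=(p^{\alpha_*}-\Delta)^{-1}$ is diagonalized by the Fourier transform, and its kernel estimates, combined with the Sobolev embedding that requires $s>\max(d,4)$, yield $\|R_p\|_{L^q\to L^2}\lesssim|p|^{-\alpha_*(1-\frac ds)}$. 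Since $V_p$ is supported in $K$, the compact support and Hölder's inequality give $\|R_pV_p\|_{L^2\to L^2}\lesssim|p|^{-\alpha_*(1-\frac ds)}\|V_p\|_{L^\infty}\lesssim|p|^{\alpha_m-\alpha_*(1-\frac ds)}$, which tends to $0$ as $|p|\to0$ precisely because of the hypothesis $\alpha_*(1-\frac ds)<\alpha_m$; the Neumann series for $(I+R_pV_p)^{-1}$ then converges and $A_p^{-1}=(I+R_pV_p)^{-1}R_p$ inherits the smoothing bound.

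It then remains to feed these bounds into the contour integral, bounding the source by $\|p^{\alpha(\cdot)-1}u_0\|_{L^q}\lesssim|p|^{\alpha_m-1}\|u_0\|_{L^q}$ for $|p|\le1$ and by $\|p^{\alpha(\cdot)-1}u_0\|_{L^2}\lesssim|p|^{\alpha_M-1}\|u_0\|_{L^2}$ for $|p|\ge1$. For $t\in(0,1]$ the radius $\delta=1/t\ge1$ keeps the whole contour in the region $|p|\ge1$, so the coercivity bound and the factor $|p|^{\alpha_M-1}$ give an integrand controlled by $|e^{pt}|\,|p|^{\alpha_M-\alpha_m-1}\|u_0\|_{L^2}$; the change of variable $p\mapsto p/t$ then produces the factor $t^{\alpha_m-\alpha_M}$ of \eqref{t1b}. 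For $t>1$ we have $\delta=1/t<1$, the dominant contribution comes from the low-frequency part $|p|\le1$, and the smoothing bound together with $|p|^{\alpha_m-1}$ gives an integrand $\lesssim|e^{pt}|\,|p|^{\alpha_m-1-\alpha_*(1-\frac ds)}\|u_0\|_{L^q}$; the same rescaling yields $t^{-(\alpha_m-\alpha_*(1-\frac ds))}$, the exponent being positive thanks to $\alpha_*(1-\frac ds)<\alpha_m$, while the high-frequency part $|p|\ge1$ is exponentially small in $t$ and only costs an extra $\|u_0\|_{L^2}$ term, giving \eqref{t1a}.

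The step I expect to be the main obstacle is the smoothing resolvent estimate for the variable-order, non-self-adjoint operator $A_p$, uniformly in $p$ up to the endpoint $|p|\to0$ of $\Gamma$. Because $p^{\alpha(\cdot)}$ does not commute with $\Delta$, $A_p$ cannot be diagonalized, and the whole low-frequency gain must be extracted from the perturbative expansion around $p^{\alpha_*}-\Delta$; making the Neumann series converge requires both the $L^q$–$L^2$ smoothing bound for $R_p$ — where the Sobolev exponent and the condition $s>\max(d,4)$ are genuinely used — and the compact support of $V_p$, and it is exactly here that the standing hypothesis $\alpha_*(1-\frac ds)<\alpha_m$ is indispensable. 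Beyond this, some care is needed to justify the deformation of the Bromwich line onto $\Gamma$ and the commutation of the Laplace inversion with the resolvent expansion, but these are expected to be routine once the uniform bounds are in place.
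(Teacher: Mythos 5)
Your proposal is correct, and its overall architecture is exactly that of the paper: Laplace-transform reduction, a perturbative resolvent analysis around the constant-order operator $-\Delta+p^{\alpha_*}$ whose Neumann series converges precisely because $\alpha_*\left(1-\frac{d}{s}\right)<\alpha_m$, and a Hankel contour of radius $1/t$ whose low-frequency part ($|p|\le 1$) carries the decay rate \eqref{t1a} and whose high-frequency part ($|p|\ge 1$) carries the singularity rate \eqref{t1b}. Where you genuinely deviate is in the proof of the key smallness estimate $\Vert R_pV_p\Vert_{\mathcal{B}(L^2(\br^d))}\lesssim |p|^{\alpha_m-\alpha_*(1-\frac{d}{s})}$ (the paper's Lemma \ref{l:re}): you derive it from an elementary $L^q\to L^2$ bound on the constant-order resolvent (Fourier multiplier, H\"older, Hausdorff--Young) combined with the compact support of $V_p$, whereas the paper factorizes ${\bf 1}_K(-\Delta+p^{\alpha_*})^{-1}$ through $(-\Delta+r^{\alpha_*})^{-1}$ and estimates the first factor in the Schatten class $\mathfrak{S}^{s/2}$ via the Kato--Seiler--Simon inequality \cite[Theorem 4.1]{S} and the second via the spectral mapping theorem. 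The two routes are quantitatively equivalent; yours avoids trace ideals entirely, and since you apply the resulting $L^q\to L^2$ bound for $A_p^{-1}$ to the whole source $p^{\alpha(\cdot)-1}u_0$, your low-frequency estimate needs only $\Vert u_0\Vert_{L^q}$, while the paper splits $p^{\alpha}=(p^{\alpha}-p^{\alpha_*})+p^{\alpha_*}$ and picks up both $\Vert u_0\Vert_{L^2}$ and $\Vert\widehat{u_0}\Vert_{L^{2s'}}$ (Proposition \ref{p:re1}), removing the latter only at the end by Hausdorff--Young and density. Two minor points to watch: your coercivity proof of the basic resolvent bound forces the narrower sector $\theta<\frac{\pi}{2\alpha_M}$, versus the paper's symmetrization argument valid on all of $\bc\setminus\br_-$ (harmless, since any $\theta>\frac{\pi}{2}$ suffices for the contour); and when you make the Fourier--H\"older computation explicit, the exponent $q$ that yields exactly the rate $|p|^{-\alpha_*(1-\frac{d}{s})}$ --- and the point where $s>4$ is actually used --- is $q=\frac{2s}{s+4}$, i.e. $q=\frac{2s'}{2s'-1}$ with $s'=\frac{s}{s-4}$ as in Proposition \ref{p:re1}, not with the conjugate exponent $s'=\frac{s}{s-1}$ displayed in the theorem's statement (an inconsistency internal to the paper itself).
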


If $\alpha$ is constant in $\br^d$ then we have $\alpha_m=\alpha_M$ and in this particular case \eqref{t1b} coincides with the short time behavior of the solution for CO fractional diffusion equations, given in \cite[Theorem 2.1]{SY}. This result indicates  that a local perturbation of $\alpha$ induces a singularity of the solution to \eqref{eq:vp} at $t=0$.   

However, for large time parameters, the rate of decay given by \eqref{t1a} in the particular case where $\alpha_m=\alpha_M$, is weaker than the one computed in \cite[Theorem 2.1]{SY} for CO fractional diffusion equations. This is due to the additional technical requirement in \eqref{t1a} that $u_0 \in L^{\frac{2s'}{2s'-1}}(\br^d)$, which was needed in the derivation of \eqref{t1a}.

The paper is organized as follows. In Section 2, we prove the existence and uniqueness of the solution to \eqref{eq:vp} by adapting the strategy implemented in \cite{ksy}. Section 3 contains technical estimates for the operator valued function $p\mapsto (-\Delta+p^\alpha)p^\alpha$ that are useful for the proof of Theorem \ref{t1}, given in Section 4.

\section{Analysis of the forward problem}


In this section we  give a precise definition of a weak solution to \eqref{eq:vp}, which is inspired from \cite[Section 2.2]{ksy}. Then we
prove that this solution exists and is unique within a suitable class of functions.

\subsection{Weak solution}
Let us first introduce some notations. We denote by ${\mathcal S}'(\br,L^2(\br^d))$ the space dual to ${\mathcal S}(\br,L^2(\br^d))$ and by $${\mathcal S}'(\br_+,L^2(\br^d)) := \{u \in {\mathcal S}'(\br,L^2(\br^d));\ \mbox{supp}\ u \subset [0,+\infty) \times \br^d \}, $$ 
the set of tempered distributions in ${\mathcal S}'(\br,L^2(\br^d))$ that are supported in $[0,+\infty) \times \br^d$. Evidently,
$u \in {\mathcal S}'(\br_+,L^2(\br^d))$ if and only if 
$u \in {\mathcal S}'(\br,L^2(\br^d))$ verifies $\langle u , \varphi \rangle_{{\mathcal S}'(\br,L^2(\br^d)),{\mathcal S}(\br,L^2(\br^d))} =0$
whenever $\varphi \in {\mathcal S}(\br,L^2(\br^d))$ vanishes in $\br_+ \times \br^d$. 
Thus, for a.e. $x\in\br^d$ we have $\langle u(\cdot,x) , \varphi \rangle_{S'(\br),S(\br)}=\langle u(\cdot,x), 
\psi \rangle_{S'(\br),S(\br)}$, provided $\varphi \in \mathcal S(\br)$ and $\psi \in \mathcal S(\br)$ coincide in $\br_+$.  In light of this, we say that $\varphi \in \mathcal S(\br_+)$ if $\varphi$ is the 
restriction to $\br_+$ of a function $\tilde{\varphi}\in \mathcal S(\br)$, and we set
$$
x\mapsto\langle u(\cdot,x) , \varphi \rangle_{\mathcal S'(\br_+),\mathcal S(\br_+)} 
:= x\mapsto\langle u(\cdot,x)  , \tilde{\varphi} \rangle_{\mathcal S'(\br),\mathcal S(\br)},
\ u \in \mathcal S'(\br_+,L^2(\br^d)),
$$
where $\tilde{\varphi}$ is any function in $\mathcal S(\br)$ 
such that $\tilde{\varphi}(t)=\varphi(t)$ for all $t \in \br_+$.

Next, for all $p \in (0,+\infty)$, we put
$$ 
e_p(t) := \exp (-p t ),\ t \in \br_+.
$$
Since $e_p \in S(\br_+)$, we define the Laplace transform  with respect to $t$ of $u \in \mathcal S'(\br_+,L^2(\br^d))$, as
\begin{equation}
\label{def-L}
U(p,\cdot) :  x\mapsto\langle u(\cdot,x) , e_p \rangle_{\mathcal S'(\br_+),\mathcal S(\br_+)},\ p \in (0,+\infty).
\end{equation}
Having said that, we may now define a weak solution to \eqref{eq:vp} as follows.

\begin{de}
\label{d1} 
Given $u_0 \in L^2(\br)$, we say that $u$ is a weak solution to \eqref{eq:vp} if $u \in \mathcal S'(\br_+,L^2(\br^d))$ has its Laplace transform $U$ with respect to $t$, defined in \eqref{def-L}, such that \begin{equation}
\label{a3}
 (- \Delta+p^{\alpha(x)}) U(p,x) = p^{\alpha(x)-1}u_0(x),\quad (p,x) \in (0,+\infty) \times \mathbb{R}^d.
\end{equation} 
\end{de}

\subsection{Existence and uniqueness result}

Assume that $u_0 \in L^2(\br^d)$. Then upon arguing as in the derivation of \cite[Theorem 1.1]{ksy}, we obtain that the IVP \eqref{eq:vp} admits a unique solution $u \in {\mathcal C}((0,T],L^2(\br^d))$, enjoying the following Duhamel representation formula
\begin{equation}
\label{sol}
u(t,x) = \frac{1}{2i\pi} \int_{\gamma(\epsilon,\theta)} e^{tp} \big( -\Delta + p^{\alpha(x)} \big)^{-1} p^{\alpha(x) - 1} u_0(x) dp.
\end{equation}
Here $\theta \in (\pi \slash 2,\pi )$ and $\epsilon \in (0,+\infty)$ are arbitrary, and
\begin{equation}
\label{g1} 
\gamma(\epsilon,\theta) := \gamma_-(\epsilon,\theta) \cup \gamma_0(\epsilon,\theta) \cup \gamma_+(\epsilon,\theta),
\end{equation}
is the contour in $\bc$, associated with
\begin{equation}
\label{g2}
\gamma_0(\epsilon,\theta) := \{ \epsilon e^{i \beta};\ \beta \in[-\theta,\theta] \}\ \mbox{and}\ \gamma_\pm(\epsilon,\theta) := \{s e^{\pm i \theta};\ s \in [\epsilon,+\infty) \}.
\end{equation}

The proof of \eqref{sol} follows the same path as the derivation \cite[Theorem 1.1]{ksy}: It relies on a careful application of the Bromwich-Mellin formula, which boils down to the following basic resolvent estimates, directly inspired by \cite[Proposition 2.1]{ksy}.

\begin{prop}
\label{p:re4}
Assume that the function $\alpha$ fulfills \eqref{eq:ha2}-\eqref{eq:ha1}. 
\begin{enumerate}
\item If $\kappa$ is not identically equal to $\alpha_*$ in $K$, we have
\begin{equation}
\label{eq:es3}
\big\Vert \big( -\Delta + r^{\alpha(x)} e^{i \alpha(x) \beta} \big)^{-1} \big\Vert_{\mathcal{B}(L^2(\br^d))} \le C_\beta \max_{j=m,M} r^{-\alpha_j},\ r \in (0,+\infty),\ \beta \in (-\pi,\pi),
\end{equation}
with
$$
C_\beta := \begin{cases} 
1 & \text{if} \quad  \beta = 0, \\ 
\max_{j=m,M} \big\vert \sin(\alpha_j\beta) \big\vert^{-1} & \text{if} \quad \beta \in \pm (0,\pi).
\end{cases}
$$
Moreover, the mapping $p \mapsto \big( -\Delta + p^{\alpha(x)} \big)^{-1}$ is 
bounded holomorphic in $\bc \setminus \br_-$.
\item If $\kappa$ is constant and equals $\alpha_*$ in $K$ (hence $\alpha(x)=\alpha_*$ for all $x \in \br^d$), we have
\begin{equation}
\label{eq:es3c}
\big\Vert \big( -\Delta + r^\alpha_* e^{i \alpha_*\beta}\big)^{-1} \big\Vert_{\mathcal{B}(L^2(\br^d))} \le C_\beta r^{-\alpha_*},\ r \in (0,+\infty),\ \beta \in (-\pi,\pi),
\end{equation}
with
$$
C_\beta := \begin{cases} 
1 & \text{if} \quad  \beta = 0, \\ 
\big\vert \sin(\alpha_*\beta) \big\vert^{-1} & \text{if} \quad \pm\beta \in  (0,\pi).
\end{cases}
$$ 
Moreover, the mapping $p \mapsto \big( -\Delta + p^{\alpha_*} \big)^{-1}$ is bounded holomorphic in $\bc \setminus \br_-$.

\end{enumerate}
\end{prop}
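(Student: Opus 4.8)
The plan is to realize $-\Delta + r^{\alpha(x)} e^{i\alpha(x)\beta}$ as the closed operator associated with the sesquilinear form $u \mapsto \|\nabla u\|_{L^2(\br^d)}^2 + \int_{\br^d} r^{\alpha(x)} e^{i\alpha(x)\beta} |u(x)|^2\, dx$, and then to read off both the norm estimate and the invertibility directly from the numerical range of this form. Since $\alpha$ satisfies \eqref{eq:ha1}, the multiplier $v(x) := r^{\alpha(x)} e^{i\alpha(x)\beta}$ is bounded, with $|v(x)| = r^{\alpha(x)} \le \max_{j=m,M} r^{\alpha_j}$, so multiplication by $v$ is a bounded operator on $L^2(\br^d)$ and $-\Delta + v$ is a bounded perturbation of the non-negative self-adjoint operator $-\Delta$, closed on $H^2(\br^d)$. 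For $u \in H^2(\br^d)$ one has
\[
\langle (-\Delta + v) u, u\rangle_{L^2(\br^d)} = \|\nabla u\|_{L^2(\br^d)}^2 + \int_{\br^d} r^{\alpha(x)}\cos(\alpha(x)\beta)|u(x)|^2 dx + i \int_{\br^d} r^{\alpha(x)}\sin(\alpha(x)\beta)|u(x)|^2 dx .
\]
For $\beta = 0$ the potential $v = r^{\alpha(\cdot)}$ is real and bounded below by $\min_{j=m,M} r^{\alpha_j}$, so $-\Delta + v$ is self-adjoint with $-\Delta + v \ge \min_{j=m,M} r^{\alpha_j}$, whence $\|(-\Delta+v)^{-1}\| \le (\min_{j=m,M} r^{\alpha_j})^{-1} = \max_{j=m,M} r^{-\alpha_j}$, which is \eqref{eq:es3} with $C_0 = 1$.

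For $\beta \in (0,\pi)$ (the case $\beta \in (-\pi,0)$ following by complex conjugation, since $|\sin|$ is even), I would use the imaginary part above. As $\alpha(x)\beta \in (0,\pi)$, the integrand $r^{\alpha(x)}\sin(\alpha(x)\beta)$ is positive, and because $r^t$ is monotone and $t \mapsto \sin(t\beta)$ is concave on $(0,\pi)$, both factors attain their minimum over $[\alpha_m,\alpha_M]$ at an endpoint, giving the pointwise bound $r^{\alpha(x)}\sin(\alpha(x)\beta) \ge \min_{j=m,M} r^{\alpha_j}\cdot \min_{j=m,M} \sin(\alpha_j\beta) =: \delta > 0$. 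Hence $|\langle(-\Delta+v)u,u\rangle| \ge \delta\|u\|^2$, and Cauchy--Schwarz yields $\|(-\Delta+v)u\| \ge \delta\|u\|$ for every $u \in H^2(\br^d)$.

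The key structural point is then to upgrade this lower bound into invertibility. The adjoint of $-\Delta + v$ is $-\Delta + \bar v$, for which the same computation gives $|\langle(-\Delta+\bar v)u,u\rangle| \ge \delta\|u\|^2$; thus both $-\Delta+v$ and its adjoint are injective and bounded below. The lower bound on $-\Delta+v$ forces its range to be closed, while injectivity of the adjoint forces that range to be dense, hence equal to $L^2(\br^d)$. Therefore $0 \in \rho(-\Delta+v)$ and $\|(-\Delta+v)^{-1}\| \le \delta^{-1}$. Recalling $\delta^{-1} = (\min_{j=m,M} r^{\alpha_j})^{-1}(\min_{j=m,M}\sin(\alpha_j\beta))^{-1} = \max_{j=m,M} r^{-\alpha_j}\cdot \max_{j=m,M}|\sin(\alpha_j\beta)|^{-1}$ yields \eqref{eq:es3}. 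For the second item, $v \equiv r^{\alpha_*}e^{i\alpha_*\beta}$ is a scalar, so $-\Delta + v$ is a complex shift of $-\Delta$; its resolvent norm at $0$ equals $1/\mathrm{dist}\big(0,\, r^{\alpha_*}e^{i\alpha_*\beta} + [0,+\infty)\big)$, which a short geometric computation bounds by $C_\beta r^{-\alpha_*}$, giving \eqref{eq:es3c} (equivalently, this is the special case $\alpha_m = \alpha_M = \alpha_*$ of the first item).

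Finally, for the holomorphy claim I would regard $p \mapsto V(p)$, multiplication by $p^{\alpha(\cdot)}$ defined through the principal branch on $\bc \setminus \br_-$, as a $\mathcal B(L^2(\br^d))$-valued map; since $\alpha$ takes values in the compact set $[\alpha_m,\alpha_M]$, the difference quotients $\frac{p^{\alpha(x)} - p_0^{\alpha(x)}}{p - p_0}$ converge to $\alpha(x) p_0^{\alpha(x)-1}$ uniformly in $x$ on compact subsets of $\bc\setminus\br_-$, so $V$ is operator-norm holomorphic. Writing $R(p) := (-\Delta + V(p))^{-1}$, which exists and is locally bounded by the estimates just established, the resolvent identity $R(p) - R(p_0) = -R(p)\big(V(p) - V(p_0)\big)R(p_0)$ shows $R$ is continuous and then differentiable, hence holomorphic, on $\bc\setminus\br_-$. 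I expect the only genuinely delicate step to be the surjectivity argument (closed range together with dense range), the norm bound itself being an immediate consequence of the numerical-range inequality; everything else reduces to routine bounded-perturbation and principal-branch bookkeeping.
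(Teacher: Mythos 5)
Your proof is correct, but it takes a genuinely different route from the paper's. The paper proves invertibility by a symmetrization trick: writing $-\Delta + r^{\alpha(x)}e^{i\alpha(x)\beta} = B_p + iU_\beta^2$, where $U_\beta$ is multiplication by $\big(r^{\alpha(x)}\sin(\alpha(x)\beta)\big)^{1/2}$ and $B_p$ is self-adjoint, it factorizes the operator as $U_\beta\big(U_\beta^{-1}B_pU_\beta^{-1}+i\big)U_\beta$, so that invertibility and the norm bound drop out of the elementary inequality $\Vert (S+i)^{-1}\Vert_{\mathcal{B}(L^2(\br^d))}\le 1$ for self-adjoint $S$, combined with $\Vert U_\beta^{-1}\Vert^2 \le \big(\min_j r^{\alpha_j}\,\min_j\sin(\alpha_j\beta)\big)^{-1}$; no discussion of ranges or adjoints is needed, because the spectral theorem for $S$ does the surjectivity work. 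You instead run the standard numerical-range (accretivity) argument: the imaginary part of the quadratic form is bounded below by $\delta=\min_j r^{\alpha_j}\,\min_j\sin(\alpha_j\beta)$, Cauchy--Schwarz gives $\Vert(-\Delta+v)u\Vert\ge\delta\Vert u\Vert$, and surjectivity follows from closed range plus injectivity of the adjoint $-\Delta+\bar v$. Both arguments pivot on exactly the same pointwise estimate $r^{\alpha(x)}\sin(\alpha(x)\beta)\ge \min_j r^{\alpha_j}\,\min_j\sin(\alpha_j\beta)$ (you justify the endpoint reduction via concavity of $\sin$ on $(0,\pi)$, which the paper leaves implicit) and yield the same constant $C_\beta\max_j r^{-\alpha_j}$. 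Yours is more elementary and more general---it applies whenever the numerical ranges of the operator and its adjoint stay away from the origin---and your $\min_j/\max_j$ bookkeeping treats $r<1$ and $r\ge 1$ in one stroke, where the paper splits into two cases; the paper's factorization is slicker but exploits the specific structure that the skew-adjoint part is a strictly positive multiplication operator, so that $U_\beta$ is boundedly invertible.

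One small caveat on your holomorphy argument (for which the paper simply cites \cite{ksy}): the phrase ``locally bounded by the estimates just established'' is not quite right, since $C_\beta\to\infty$ as $\beta\to 0$, so \eqref{eq:es3} is not locally uniform near the positive real axis. The fix is routine and already within your reach: invertibility at each $p_0$ plus norm-continuity of $p\mapsto p^{\alpha(\cdot)}$ give, via a Neumann series, that $\Vert R(p)\Vert\le 2\Vert R(p_0)\Vert$ for $p$ near $p_0$, which is the local boundedness your resolvent-identity argument needs.
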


\noindent
\begin{proof}
We shall prove the first statement only, the second one being obtained in a similar way. Moreover, since both cases $r \in (0,1)$ and
 $r \in [1,+\infty)$ can be handled analogously, we assume that $r \geq 1$ in the remaining part of the proof.

a) Suppose that $\beta \in (0,\pi)$, the case $\beta \in (-\pi,0)$ being treated in the same way. Let $U_\beta$ denote the multiplication operator in 
$L^2(\br^d)$ by the function
$$
u_\beta(x) := \Big( r^{\alpha(x)} \sin \big( \beta \alpha(x) \big) \Big)^{1/2}, \quad
x \in \br^d. 
$$
Notice that $iU_\beta^2$ is the skew-adjoint part of the operator $-\Delta + r^{\alpha(x)} e^{i\alpha(x)\beta}$. By setting $m_\beta := \min_{j=m,M} \sin(\alpha_j\beta)$, 
we have
$$
0 < m_\beta^{1/2}r^{\alpha_m/2} \le u_\beta(x) \le r^{\alpha_m/2}, \quad x \in \br^d,
$$
so the self-adjoint operator $U_\beta$ is bounded and invertible in $L^2(\br^d)$, and
\begin{equation}
\label{eq:re5}
\Vert U_\beta^{-1} \Vert_{\mathcal{B}(L^2(\br^d))} \le m_\beta^{-1/2}r^{-\alpha_m/2}.
\end{equation}
For any $p = re^{i\beta}$, let $B_p := -\Delta + r^{\alpha(x)} \cos \big( \beta \alpha(x) \big)$ denote the self-adjoint part of the operator 
$-\Delta + r^{\alpha(x)} e^{i\alpha(x)\beta}$, i.e.
$$
-\Delta + r^{\alpha(x)} e^{i\alpha(x)\beta} = B_p + iU_\beta^2.
$$
Since the multiplication operator by the function $x \mapsto r^{\alpha(x)} \cos \big( \beta \alpha(x) \big)$ is bounded (by $r^{\alpha_M}$) in $L^2(\br^d)$, then 
the operator $B_p$ is self-adjoint in $L^2(\br^d)$ with domain $D(B_p) = D(-\Delta)$. Thus, $U_\beta^{-1} B_p U_\beta^{-1}$ is self-adjoint in $L^2(\br^d)$ 
with domain $U_\beta D(-\Delta)$ and we have
\begin{equation}
\label{eq:re6}
-\Delta + p^{\alpha(x)} = U_\beta \big( U_\beta^{-1} B_p U_\beta^{-1} + i) U_\beta,
\end{equation}
with 
\begin{equation}
\label{eq:re7}
\Vert \big( U_\beta^{-1} B_p U_\beta^{-1} + i)^{-1} \Vert_{\mathcal{B}(L^2(\br^d))} \le 1.
\end{equation}
From this and \eqref{eq:re6} it follows that the operator 
$-\Delta + r^{\alpha(x)} e^{i\alpha(x)\beta}$ is invertible in $L^2(\br^d)$, with
\begin{equation}
\label{eq:re8}
\big( -\Delta + r^{\alpha(x)} e^{i\alpha(x)\beta} \big)^{-1} = U_\beta^{-1} \big( U_\beta^{-1} B_p U_\beta^{-1} + i)^{-1} U_\beta^{-1}.
\end{equation}
Hence $\big( -\Delta + r^{\alpha(x)} e^{i\alpha(x)\beta} \big)^{-1}$ maps 
$L^2(\br^d)$ onto $U_\beta^{-1} D \big( U_\beta^{-1} B_p U_\beta^{-1} \big) = D(-\Delta)$, and we infer from
\eqref{eq:re5} and \eqref{eq:re7}-\eqref{eq:re8} that
$$
\Vert \big( -\Delta + r^{\alpha(x)} e^{i\alpha(x)\beta} \big)^{-1} \Vert_{\mathcal{B}(L^2(\br^d))} \le \Vert \big( U_\beta^{-1} B_p U_\beta^{-1} + i)^{-1} \Vert_{\mathcal{B}(L^2(\br^d))} \Vert U_\beta^{-1} \Vert_{\mathcal{B}(L^2(\br^d))}^2 \le m_\beta^{-1}r^{-\alpha_m}.
$$

b) Assume that $\beta = 0$. Then we have $-\Delta + p^{\alpha(x)} = -\Delta + r^{\alpha(x)}$ and consequently
$$
-\Delta + p^{\alpha(x)} \ge r^{\alpha_m} > 0,
$$
in the operator sense. Therefore $-\Delta + p^{\alpha(x)}$ is boundedly invertible in $L^2(\br^d)$, with
$$
\big\Vert \big( -\Delta + p^{\alpha_*} \big)^{-1} \big\Vert_{\mathcal{B}(L^2(\br^d))} \le r^{-\alpha_m}.
$$

Finally, arguing as in the derivation \cite[Proposition 2.1]{ksy}, we get that $p \mapsto \big( -\Delta + p^{\alpha(x)} \big)^{-1}$ is bounded holomorphic in $\bc \setminus \br_-$, so the proof is complete.
\end{proof}

It is clear from \eqref{sol} that the time evolution of the solution $u$ to \eqref{eq:vp} is determined by the behavior of the mapping
$p \mapsto ( -\Delta + p^{\alpha(x)})^{-1} p^{\alpha(x)} u_0(x)$ on the integration path $\gamma(\epsilon,\theta)$. More precisely, we shall see that the analysis of the time asymptotic behavior of $u$ boils down to specific estimates of the resolvent of the operator $-\Delta + p^{\alpha(x)}$, given in the coming section.

\section{Resolvent estimates}

For  $s \in [1,+\infty)$ we denote by ${\mathfrak{S}^s(L^2(\br^d))}$ the Schatten-von Neumann classes of compact linear operators $L$ for which the 
norm $\Vert L \Vert_{\mathfrak{S}^s(L^2(\br^d))} := \big( \textup{Tr} \; \vert L \vert^s \big)^{1 \slash s}$ is finite. For further use, we recall the two following 
well known properties (see e.g. \cite{S}) about these sets:
\begin{enumerate}[(SvN1)]
\item For all $L \in {\mathfrak{S}^s(L^2(\br^d))}$ it holds true that $\Vert L \Vert_{\mathcal{B}(L^2(\br^d))} \le \Vert L \Vert_{\mathfrak{S}^s(L^2(\br^d))}$.
\item For all $A \in \mathcal{B}(L^2(\br^d))$ and all $L \in {\mathfrak{S}^s(L^2(\br^d))}$, the two operators $AL$ and $LA$ belong to ${\mathfrak{S}^s(L^2(\br^d))}$. 
Moreover the norms $\Vert AL \Vert_{\mathfrak{S}^s(L^2(\br^d))}$ and $\Vert LA \Vert_{\mathfrak{S}^s(L^2(\br^d))}$ are majorised by $\Vert A \Vert_{\mathcal{B}(L^2(\br^d))} \Vert L \Vert_{\mathfrak{S}^s(L^2(\br^d))}$.
\end{enumerate}

We start by establishing the following technical result.

\begin{lem}
\label{l:re}
Fix $s \in \left(  \max(d,4) , +\infty \right)$, pick 
$\alpha_* \in [\alpha_m,\alpha_M]$ such that $\alpha_* \left( 1 - \frac{d}{s} \right)< \alpha_m$, and set
\begin{equation}
\label{eq:ct}
\theta := \min \Big( \frac{2\pi}{3},\frac{\pi}{2}\alpha_*^{-1} \Big).
\end{equation}
Let $\alpha \in L^\infty(\br^d)$ fulfill \eqref{eq:ha2}-\eqref{eq:ha1}.
Then, there exists a constant $r_0=r_0(d,s,\alpha_m,\alpha_*) \in (0,1)$ such for all $p=r e^{i \beta}$ with $r \in (0,r_0)$ 
and $\beta \in [ -\theta,\theta ]$, we have:
\begin{equation}
\label{eq:es1}
\big\Vert \big( p^{\alpha_*} - p^{\alpha} \big) \big( -\Delta + p^{\alpha_*} \big)^{-1} \big\Vert_{\mathcal{B}(L^2(\br^d))} \le \frac{1}{2}.
\end{equation}
\end{lem}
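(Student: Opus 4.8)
The plan is to exploit that, by \eqref{eq:ha2}, the function $v_p(x) := p^{\alpha_*} - p^{\alpha(x)}$ vanishes for $x \notin K$, so that $V(p) := p^{\alpha_*} - p^{\alpha}$ is a multiplication operator supported in the compact set $K$, whereas $(-\Delta + p^{\alpha_*})^{-1}$ is a genuine constant-coefficient Fourier multiplier (since $\alpha_*$ is constant), with symbol $f_p(\xi) = (|\xi|^2 + p^{\alpha_*})^{-1}$. The operator-norm bound of Proposition \ref{p:re4}, namely $\|(-\Delta+p^{\alpha_*})^{-1}\|_{\mathcal B(L^2(\br^d))} \le C_\beta r^{-\alpha_*}$, is too crude here: combined with $\|v_p\|_{L^\infty} \le 2 r^{\alpha_m}$ it would only yield a bound of size $r^{\alpha_m - \alpha_*}$, which blows up as $r \to 0$ because $\alpha_* \ge \alpha_m$. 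The gain must therefore come from the compactness of $K$, and I would extract it through a Schatten-norm estimate of $V(p)(-\Delta+p^{\alpha_*})^{-1} = v_p(X)\, f_p(D)$, an operator of the form $g(X) h(D)$ to which the Kato--Seiler--Simon inequality (see \cite{S}) applies.

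First I would record the pointwise bound $|v_p(x)| \le |p^{\alpha_*}| + |p^{\alpha(x)}| = r^{\alpha_*} + r^{\alpha(x)} \le 2 r^{\alpha_m}$ for $r \in (0,1)$, valid since $r^{\alpha} \le r^{\alpha_m}$ whenever $\alpha \ge \alpha_m$ and $r < 1$; because $v_p$ is supported in $K$ this gives $\|v_p\|_{L^{s/2}(\br^d)} \le 2|K|^{2/s} r^{\alpha_m}$. Next I would apply the Kato--Seiler--Simon bound with Schatten exponent $q = s/2$, which is admissible since $s > 4$ ensures $s/2 \ge 2$ and $s > d$ ensures $f_p \in L^{s/2}(\br^d)$, obtaining
\[
\big\| V(p)(-\Delta+p^{\alpha_*})^{-1} \big\|_{\mathfrak{S}^{s/2}(L^2(\br^d))} \le C \, \|v_p\|_{L^{s/2}(\br^d)} \, \|f_p\|_{L^{s/2}(\br^d)}.
\]
The remaining task is to estimate $\|f_p\|_{L^{s/2}}$. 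Writing $p^{\alpha_*} = r^{\alpha_*} e^{i\alpha_*\beta}$ and performing the change of variables $\xi = r^{\alpha_*/2}\eta$ produces the exact scaling
\[
\|f_p\|_{L^{s/2}(\br^d)} = r^{\alpha_*\left(\frac{d}{s}-1\right)} \Big( \int_{\br^d} \big| \, |\eta|^2 + e^{i\alpha_*\beta} \, \big|^{-s/2}\, d\eta \Big)^{2/s}.
\]

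The key point, and the step I expect to be the main obstacle, is to control this last integral uniformly in $\beta \in [-\theta,\theta]$. Here the choice \eqref{eq:ct} of $\theta$ is decisive: since $\theta \le \frac{\pi}{2}\alpha_*^{-1}$ we have $\alpha_*|\beta| \le \pi/2$, hence $\cos(\alpha_*\beta) \ge 0$, and the elementary identity $\big| |\eta|^2 + e^{i\alpha_*\beta} \big|^2 = \big( |\eta|^2 + \cos(\alpha_*\beta) \big)^2 + \sin^2(\alpha_*\beta)$ then shows that $\big| |\eta|^2 + e^{i\alpha_*\beta} \big| \ge \max(1,|\eta|^2)$ for every $\eta$. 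This lower bound is independent of $\beta$ and makes the integral both convergent (because $s > d$) and bounded by a constant $C(d,s)$ that is uniform in $\beta$.

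Combining the three estimates and invoking property (SvN1) to pass from the $\mathfrak{S}^{s/2}$-norm to the operator norm, I would obtain
\[
\big\| V(p)(-\Delta+p^{\alpha_*})^{-1} \big\|_{\mathcal B(L^2(\br^d))} \le C\, r^{\alpha_m - \alpha_*\left(1-\frac{d}{s}\right)},
\]
with $C$ depending only on the fixed data $d$, $s$, $K$, $\alpha_m$ and $\alpha_*$. Finally, the hypothesis $\alpha_* \left(1-\frac{d}{s}\right) < \alpha_m$ guarantees that the exponent $\alpha_m - \alpha_*\left(1-\frac{d}{s}\right)$ is strictly positive, so the right-hand side tends to $0$ as $r \to 0^+$; it then suffices to fix $r_0 \in (0,1)$ so small that $C\, r_0^{\alpha_m - \alpha_*(1-d/s)} \le \frac12$ in order to conclude \eqref{eq:es1} for all $r \in (0,r_0)$ and all $\beta \in [-\theta,\theta]$.
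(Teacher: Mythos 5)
Your proof is correct and takes essentially the same route as the paper: both exploit that $p^{\alpha_*}-p^{\alpha}$ is supported in $K$ with $\sup_x|p^{\alpha_*}-p^{\alpha(x)}|\le 2r^{\alpha_m}$, control the resulting operator through the Kato--Seiler--Simon inequality in $\mathfrak{S}^{s/2}$ (Simon's Theorem 4.1, requiring $s>\max(d,4)$), obtain the rate $r^{\alpha_m-\alpha_*\left(1-\frac{d}{s}\right)}$ by scaling, and conclude by choosing $r_0$ small thanks to the hypothesis $\alpha_*\left(1-\frac{d}{s}\right)<\alpha_m$. The only (harmless) difference is in handling the complex shift: the paper first passes to the real resolvent $\big(-\Delta+r^{\alpha_*}\big)^{-1}$ and bounds $\big\Vert\big(-\Delta+r^{\alpha_*}\big)\big(-\Delta+p^{\alpha_*}\big)^{-1}\big\Vert_{\mathcal{B}(L^2(\br^d))}\le 3$ via the spectral mapping theorem, whereas you apply Kato--Seiler--Simon directly to the complex symbol and bound the rescaled integral uniformly in $\beta$ using $\cos(\alpha_*\beta)\ge 0$ --- in both arguments this is precisely where the choice \eqref{eq:ct} of $\theta$ enters.
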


\begin{proof}
Since $p^{\alpha_*} - p^{\alpha(x)} = \big( p^{\alpha_*} - p^{\alpha(x)} \big) {\bf 1}_K(x)$ for all $x \in \br^d$, 
where ${\bf 1}_K$ denotes the characteristic function of $K$, from \eqref{eq:ha2}, we have
\begin{eqnarray}
\big\Vert  \big( p^{\alpha_*} - p^{\alpha} \big) \big( -\Delta + p^{\alpha_*} \big)^{-1} \big\Vert_{\mathcal{B}(L^2(\br^d))} & \leq &
\sup_{x \in \br^d} \big\vert p^{\alpha_*} - p^{\alpha(x)} \big\vert \big\Vert {\bf 1}_K \big( -\Delta + p^{\alpha_*} \big)^{-1} \big\Vert_{\mathcal{B}(L^2(\br^d))} \nonumber \\
& \leq & 2 r^{\alpha_m} \big\Vert {\bf 1}_K \big( -\Delta + p^{\alpha_*} \big)^{-1} \big\Vert_{\mathcal{B}(L^2(\br^d))}.\label{eq:re01}
\end{eqnarray}
To estimate the last factor of the product in the right hand side of \eqref{eq:re01}, we successively apply the above Properties (SvN1) and (SvN2), and obtain that
\begin{eqnarray}
& & \big\Vert {\bf 1}_K \big( -\Delta + p^{\alpha_*} \big)^{-1} \big\Vert_{\mathcal{B}(L^2(\br^d))} \nonumber \\
& \le &
\big\Vert {\bf 1}_K \big( -\Delta + p^{\alpha_*} \big)^{-1} \big\Vert_{\mathfrak{S}^{s/2}(L^2(\br^d))} \nonumber \\
& \le & \big\Vert {\bf 1}_K \big( -\Delta + r^{\alpha_*} \big)^{-1} \big\Vert_{\mathfrak{S}^{s/2}(L^2(\br^d))} \big\Vert \big( -\Delta + r^{\alpha_*} \big) 
\big( -\Delta + p^{\alpha_*} \big)^{-1} \big\Vert_{\mathcal{B}(L^2(\br^d))}. \label{eq:re03}
\end{eqnarray}
Next, since $\frac{s}{2} \ge 2$, we have
$$
\big\Vert {\bf 1}_K \big( -\Delta + r^{\alpha_*} \big)^{-1} \big\Vert_{\mathfrak{S}^{s/2}(L^2(\br^d))} \le (2\pi)^{-\frac{4d}{s^2}} \Vert {\bf 1}_K\Vert_{L^{s/2}(\br^d)} 
\big\Vert \big( \vert \cdot \vert^2 + r^{\alpha_*} \big)^{-1} \big\Vert_{L^{s/2}(\br^d)},
$$
by virtue of  \cite[Theorem 4.1]{S}, with
$\big\Vert \big( \vert \cdot \vert^2 + r^{\alpha_*} \big)^{-1} \big\Vert^{s/2}_{L^{s/2}(\br^d)} 
= C_d \int_0^\infty \frac{\vartheta^{d-1}}{\big( \vartheta^2 + r^{\alpha_*} \big)^{s/2}} d\vartheta$ for some constant $C_d$ depending only on $d$, and
$\int_0^\infty \frac{\vartheta^{d-1}}{\big( \vartheta^2 + r^{\alpha_*} \big)^{s/2}} d\vartheta = \left( \int_0^\infty \frac{\varrho^{d-1}}{\big( \varrho^2 + 1 \big)^{s/2}} d\varrho \right) r^{-\alpha_*\big(\frac{s}{2} - \frac{d}{2}\big)}$. Therefore we get
\begin{equation}
\label{eq:re05}
\big\Vert {\bf 1}_K \big( -\Delta + r^{\alpha_*} \big)^{-1} \big\Vert_{\mathfrak{S}^{s/2}(L^{s/2}(\br^d))} \le C_{d,s} \Vert {\bf 1}_K \Vert_{L^{s/2}(\br^d)} r^{-\alpha_*\big(1 - \frac{d}{s}\big)},
\end{equation}
where $C_{d,s}$ is a positive constant depending only on $d$ and $s$.
Further we estimate the second term of the product in the right hand side of \eqref{eq:re03}, with the help of the spectral mapping theorem, getting:
\begin{equation}
\label{eq:re06}
\big\Vert \big( -\Delta + r^{\alpha_*} \big) \big( -\Delta + p^{\alpha_*} \big)^{-1} \big\Vert_{\mathcal{B}(L^2(\br^d))} \le 
\sup_{\lambda \in [0,+\infty)} \Bigg\vert \frac{\lambda + r^{\alpha_*}}{\lambda + p^{\alpha_*}} \Bigg\vert.
\end{equation}
Moreover, since $\beta \in \big[ -\theta,\theta \big]$ where $\theta$ is given by \eqref{eq:ct}, then $\alpha_* \beta \in \big[ -\frac{\pi}{2},\frac{\pi}{2} \big]$ and thus the real part of $p^{\alpha_*}$ is nonnegative: $\Re \big( p^{\alpha_*} \big) \ge 0$. As a consequence we have $$
\Bigg\vert \frac{\lambda + r^{\alpha_*}}{\lambda + p^{\alpha_*}} \Bigg\vert \le 1 + \Bigg\vert \frac{r^{\alpha_*} - p^{\alpha_*}}{\lambda + p^{\alpha_*}} \Bigg\vert 
\le 1 + \frac{2r^{\alpha_*}}{\big\vert \lambda + p^{\alpha_*} \big\vert} 
\le 1 + \frac{2r^{\alpha_*}}{r^{\alpha_*}} = 3,$$
for all $\lambda \in [0,+\infty)$, and \eqref{eq:re06} then yields
\begin{equation}
\label{eq:re07}
\big\Vert \big( -\Delta + r^{\alpha_*} \big) \big( -\Delta + p^{\alpha_*} \big)^{-1} \big\Vert_{\mathcal{B}(L^2(\br^d))} \le 3.
\end{equation}
Now, putting \eqref{eq:re03}, \eqref{eq:re05} and \eqref{eq:re07} together, we obtain
$$
\big\Vert {\bf 1}_K \big( -\Delta + p^{\alpha_*} \big)^{-1} \big\Vert_{\mathcal{B}(L^2(\br^d))} \le C_{d,s} 
\Vert {\bf 1}_K \Vert_{L^{s/2}(\br^d)} r^{-\alpha_*\big(1 - \frac{d}{s}\big)},
$$
by substituting $C_{d,s}$ for $3 C_{d,s}$. From this and \eqref{eq:re01} it then follows upon replacing $C_{d,s}$ by $2 C_{d,s}$, that
$$
 \big\Vert \big( p^{\alpha_*} - p^{\alpha} \big) \big( -\Delta + p^{\alpha_*} \big)^{-1} \big\Vert_{\mathcal{B}(L^2(\br^d))} 
\le C_{d,s} \Vert {\bf 1}_K \Vert_{L^{s/2}(\br^d)} r^{\alpha_m - \alpha_*\big(1 - \frac{d}{s}\big)}.
$$
Finally, the desired result follows from this by taking
$$ r_0 = \min \left( 1 , \left( 2 C_{d,s} \Vert {\bf 1}_K \Vert_{L^{s/2}(\br^d)} \right)
^{-\frac{1}{\alpha_m - \alpha_* \left(1 - \frac{d}{s}\right)}} \right). $$
\end{proof}

\begin{rem}
It can be seen from the proof that the result of Lemma \ref{l:re} remains valid upon replacing the constant $\frac{2\pi}{3}$ appearing in \eqref{eq:ct} by any arbitrary real number 
$\rho \in \left( \frac{\pi}{2} , \pi \right)$.
\end{rem}

Armed with Lemma \ref{l:re} we may now prove the main result of this section.

\begin{prop}
\label{p:re1}
Let $s$ and $\theta$ be as in Lemma \ref{l:re}, let $\alpha \in L^\infty(\br^d)$ fulfill \eqref{eq:ha2}-\eqref{eq:ha1}, and pick $u_0 \in L^1(\br^d) \cap L^2(\br^d)$. 
\begin{enumerate}[a)]
\item If $\alpha_*$ obeys the same conditions as in Lemma \ref{l:re} and that the function $\alpha$ is non constant in $\br^d$, then there exists a positive constant $C=C(d,s,K)$ such that the following estimate
\begin{equation}
\label{eq:es2}
\big\Vert \big( -\Delta + p^{\alpha} \big)^{-1} p^{\alpha} u_0 \big\Vert_{L^2(\br^d)}
\le C_{d,s,K} \Big( \Vert u_0 \Vert_{L^2(\br^d)} + \Vert \widehat{u_0} \Vert_{L^{2s'}(\br^d)} \Big) r^{\alpha_m-\alpha_*\big(1 - \frac{d}{s}\big)}
\end{equation}
holds for all $p = re^{i\beta} \in \bc \setminus \br_-$ with $\beta \in \big[ -\theta,\theta \big]$ and $r \in \big(0,r_0 \big)$, where $r_0$ is the constant defined in Lemma \ref{l:re}. Here $s' := \frac{s}{s-4}$ and $\widehat{u_0}$ is the Fourier tranform of $u_0$.
\item If $\alpha=\alpha_*$ a.e. in $\br^d$ for some $\alpha_* \in [\alpha_m,\alpha_M]$, then there exists a constant $C=C(d,s)>0$ such that we have
\begin{equation}
\label{eq:es2c}
\big\Vert \big( -\Delta + p^{\alpha_*} \big)^{-1} p^{\alpha_*} u_0 \big\Vert_{L^2(\br^d)} 
\le C_{d,s} \Vert \widehat{u_0} \Vert_{L^{2s'}(\br^d)} r^{\alpha_* \frac{d}{s}},
\end{equation}
for all $p = re^{i\beta} \in \bc \setminus \br_-$ 
with $\beta \in \big[ -\theta,\theta \big]$ and $r \in (0,1)$, $s'$ being the same as above.
\end{enumerate}
\end{prop}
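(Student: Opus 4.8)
The plan is to establish the constant-order case (b) first, since it is the analytic core, and then deduce (a) by perturbing the variable-order resolvent around the constant-order operator $A:=-\Delta+p^{\alpha_*}$, using Lemma~\ref{l:re} together with the localized Schatten bound already isolated inside its proof. In both parts the choice of $\theta$ in \eqref{eq:ct} is used through the fact that $\alpha_*\beta\in\big[-\tfrac{\pi}{2},\tfrac{\pi}{2}\big]$ for $\beta\in[-\theta,\theta]$, so $\Re\big(p^{\alpha_*}\big)\ge0$.

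For part (b) I would pass to the Fourier side. Since $A$ acts as multiplication by $|\xi|^2+p^{\alpha_*}$, Plancherel's theorem turns the left-hand side of \eqref{eq:es2c} into the $L^2$-norm of the multiplier $m(\xi):=p^{\alpha_*}\big(|\xi|^2+p^{\alpha_*}\big)^{-1}$ acting on $\widehat{u_0}$. I would then apply Hölder's inequality to $\int_{\br^d}|m|^2|\widehat{u_0}|^2$ with the conjugate exponents $\tfrac{s}{4}$ and $s'=\tfrac{s}{s-4}$, which reduces the task to estimating $\Vert m\Vert_{L^{s/2}(\br^d)}$. The nonnegativity $\Re\big(p^{\alpha_*}\big)\ge0$ yields the pointwise comparison $|m(\xi)|\le\sqrt{2}\,r^{\alpha_*}\big(|\xi|^2+r^{\alpha_*}\big)^{-1}$, after which the same radial scaling computation as in the proof of Lemma~\ref{l:re} gives $\Vert m\Vert_{L^{s/2}(\br^d)}\le C_{d,s}\,r^{\alpha_*\frac{d}{s}}$, the integral converging because $s>d$. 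Combining these steps produces \eqref{eq:es2c} (and $\widehat{u_0}\in L^{2s'}(\br^d)$ is finite by interpolating $u_0\in L^1\cap L^2$).

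For part (a), set $v:=\big(-\Delta+p^{\alpha}\big)^{-1}p^{\alpha}u_0$. Using $-\Delta+p^\alpha=A+(p^\alpha-p^{\alpha_*})$, I would rewrite the defining equation $\big(-\Delta+p^\alpha\big)v=p^\alpha u_0$ as the exact identity $v=A^{-1}p^{\alpha_*}u_0+A^{-1}(p^\alpha-p^{\alpha_*})(u_0-v)$. Part (b) controls the first term by $C\,r^{\alpha_*\frac{d}{s}}\Vert\widehat{u_0}\Vert_{L^{2s'}(\br^d)}$. For the second term I would use that $p^\alpha-p^{\alpha_*}=(p^\alpha-p^{\alpha_*}){\bf 1}_K$ is supported in $K$ with $|p^\alpha-p^{\alpha_*}|\le 2r^{\alpha_m}$ on $(0,1)$, so that $\big\Vert A^{-1}(p^\alpha-p^{\alpha_*})\big\Vert_{\mathcal B(L^2(\br^d))}\le 2r^{\alpha_m}\big\Vert A^{-1}{\bf 1}_K\big\Vert_{\mathcal B(L^2(\br^d))}$. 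The localized bound $\big\Vert A^{-1}{\bf 1}_K\big\Vert\le C_{d,s}\Vert{\bf 1}_K\Vert_{L^{s/2}}r^{-\alpha_*(1-\frac{d}{s})}$ is precisely the estimate proved inside Lemma~\ref{l:re} for ${\bf 1}_K A^{-1}$, transferred to $A^{-1}{\bf 1}_K$ by taking adjoints, i.e. replacing $p$ by $\overline{p}$ (which leaves $\beta\in[-\theta,\theta]$ invariant). Hence $\big\Vert A^{-1}(p^\alpha-p^{\alpha_*})\big\Vert\le C\,r^{\mu}$ with $\mu:=\alpha_m-\alpha_*\big(1-\tfrac{d}{s}\big)>0$; crucially, this is the very quantity shown to be $\le\tfrac12$ in Lemma~\ref{l:re}, so the bound $\le\tfrac12$ holds on all of $(0,r_0)$. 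Splitting $\big\Vert A^{-1}(p^\alpha-p^{\alpha_*})(u_0-v)\big\Vert\le C\,r^\mu\Vert u_0\Vert_{L^2}+\tfrac12\Vert v\Vert_{L^2}$, absorbing the last term into the left-hand side, and invoking $\alpha_*\ge\alpha_m$ (so $r^{\alpha_*\frac{d}{s}}\le r^{\mu}$ for $r<1$), yields \eqref{eq:es2} with a constant uniform on $(0,r_0)$.

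The main obstacle is the second term in (a): the naive resolvent bound $\big\Vert A^{-1}\big\Vert\le C_\beta r^{-\alpha_*}$ from Proposition~\ref{p:re4} produces the coefficient $r^{\alpha_m-\alpha_*}$, whose exponent is negative and too large, so the perturbation could neither be absorbed nor made to decay. The whole argument hinges on replacing it by the localized Schatten bound, whose improved exponent $-\alpha_*\big(1-\tfrac{d}{s}\big)$ is exactly what makes the coefficient equal to $C r^{\mu}$; this single quantity simultaneously supplies the time-decay factor and, being the one estimated by $\tfrac12$ in Lemma~\ref{l:re}, legitimizes the absorption uniformly in $r\in(0,r_0)$. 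The remaining work is routine bookkeeping: checking the Hölder exponents and the symbol integral in (b), and verifying the exponent inequality $\alpha_*\tfrac{d}{s}\ge\mu$.
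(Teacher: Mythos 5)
Your proposal is correct and follows essentially the same route as the paper's proof: both perturb around the constant-order operator $-\Delta+p^{\alpha_*}$, control the perturbation by the adjoint form of Lemma \ref{l:re} together with the localized Schatten bound $\Vert {\bf 1}_K(-\Delta+\overline{p}^{\alpha_*})^{-1}\Vert \le C r^{-\alpha_*(1-\frac{d}{s})}$ extracted from its proof, treat the constant-order term on the Fourier side via H\"older with exponents $\frac{s}{4}$ and $s'$, and conclude with the same exponent comparison $r^{\alpha_*\frac{d}{s}}\le r^{\alpha_m-\alpha_*(1-\frac{d}{s})}$. The only differences are cosmetic: your fixed-point identity for $v$ with absorption of $\tfrac12\Vert v\Vert_{L^2(\br^d)}$ is an unrolled version of the paper's Neumann-series inversion of $I-(-\Delta+p^{\alpha_*})^{-1}(p^{\alpha_*}-p^{\alpha})$ in \eqref{eq:re2}--\eqref{eq:re2,0}, and you prove part (b) first and feed it into part (a), whereas the paper proves (a) and obtains \eqref{eq:es2c} as a byproduct of \eqref{eq:re4,5}.
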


\begin{proof}
We start by proving the first statement of the result, i.e. the one associated with a function $\alpha$ that is non constant in $\br^d$. To do that we use the following basic inequality
\begin{eqnarray}
& & \big\Vert  \big( -\Delta + p^{\alpha} \big)^{-1} p^{\alpha} u_0 \big\Vert_{L^2(\br^d)} \nonumber \\
& \le & \big\Vert \big( -\Delta + p^{\alpha} \big)^{-1} \big( p^{\alpha} - p^{\alpha_*} \big) u_0 \big\Vert_{L^2(\br^d)} + 
\big\Vert \big( -\Delta + p^{\alpha} \big)^{-1} p^{\alpha_*} u_0 \big\Vert_{L^2(\br^d)}, \label{eq:1}
\end{eqnarray}
and estimate each of the two terms appearing in the right hand side of \eqref{eq:1}. The first one is treated with the aid of the resolvent identity, 
$$
\big( -\Delta + p^{\alpha} \big)^{-1} - \big( -\Delta + p^{\alpha_*} \big)^{-1} 
= \big( -\Delta + p^{\alpha_*} \big)^{-1} \big( p^{\alpha_*} - p^{\alpha} \big) \big( -\Delta + p^{\alpha} \big)^{-1},
$$
giving
\begin{equation}
\label{eq:re1}
\Big( I - \big( -\Delta + p^{\alpha_*} \big)^{-1} \big( p^{\alpha_*} - p^{\alpha} \big) \Big) \big( -\Delta + p^{\alpha} \big)^{-1}  
= \big( -\Delta + p^{\alpha_*} \big)^{-1}.
\end{equation}
Since $\big\Vert \big( -\Delta + p^{\alpha_*} \big)^{-1} \big( p^{\alpha_*} - p^{\alpha} \big) \big\Vert_{\mathcal{B}(L^2(\br^d))} = 
\big\Vert \big( \overline{p}^{\alpha_*} - \overline{p}^{\alpha(x)} \big) \big( -\Delta + \overline{p}^{\alpha_*} \big)^{-1} \big\Vert_{\mathcal{B}(L^2(\br^d))}$, 
we infer from Lemma \ref{l:re} that the operator $I - \big( -\Delta + p^{\alpha_*} \big)^{-1} \big( p^{\alpha_*} - p^{\alpha} \big)$ is boundedly invertible in $L^2(\br^d)$, with 
\begin{equation}
\label{eq:re2}
\Big\Vert \Big( I - \big( -\Delta + p^{\alpha_*} \big)^{-1} \big( p^{\alpha_*} - p^{\alpha} \big) \Big)^{-1} \Big\Vert_{\mathcal{B}(L^2(\br^d))} 
\le \frac{1}{1 - \frac{1}{2}} \le 2.
\end{equation}
Therefore we have
\begin{equation}
\label{eq:re2,0}
\big( -\Delta + p^{\alpha} \big)^{-1} = \Big( I - \big( -\Delta + p^{\alpha_*} \big)^{-1} \big( p^{\alpha_*} - p^{\alpha} \big) \Big)^{-1} 
\big( -\Delta + p^{\alpha_*} \big)^{-1},
\end{equation}
from \eqref{eq:re1}, and
\begin{eqnarray*}
\big\Vert \big( -\Delta + p^{\alpha} \big)^{-1} \big( p^{\alpha} - p^{\alpha_*} \big) u_0 \big\Vert_{L^2(\br^d)} 
& \le  & 2 \big\Vert \big( -\Delta + p^{\alpha_*} \big)^{-1} \big( p^{\alpha} - p^{\alpha_*} \big) u_0 \big\Vert_{L^2(\br^d)} \nonumber \\
& \le & 2 \big\Vert \big( -\Delta + p^{\alpha_*} \big)^{-1} {\bf 1}_K \big\Vert_{\mathcal{B}(L^2(\br^d))} 
\big\Vert \big( p^{\alpha} - p^{\alpha_*} \big) u_0 \big\Vert_{L^2(\br^d)} 
\end{eqnarray*}
upon recalling that $p^{\alpha(x)} - p^{\alpha_*} = {\bf 1}_K(x) (p^{\alpha(x)} - p^{\alpha_*})$ for all $x \in \br^d$. 
Since $r \in (0,r_0) \subset (0,1)$, this leads to
\begin{eqnarray}
\big\Vert \big( -\Delta + p^{\alpha} \big)^{-1} \big( p^{\alpha} - p^{\alpha_*} \big) u_0 \big\Vert_{L^2(\br^d)}
& \le & 4 \big\Vert \big( -\Delta + p^{\alpha_*} \big)^{-1} {\bf 1}_K \big\Vert_{\mathcal{B}(L^2(\br^d))} \Vert u_0 \Vert_{L^2(\br^d)} r^{\alpha_m} \nonumber \\
& \le & 4 \big\Vert {\bf 1}_K \big( -\Delta + {\overline p}^{\alpha_*} \big)^{-1} \big\Vert_{\mathcal{B}(L^2(\br^d))} \Vert u_0 \Vert_{L^2(\br^d)} r^{\alpha_m}.\label{eq:re3}
\end{eqnarray}
Further, applying \eqref{eq:re03}-\eqref{eq:re05} and \eqref{eq:re07} with ${\overline p}$ instead of $p$, we  get that
$$
\big\Vert {\bf 1}_K \big( -\Delta + {\overline p}^{\alpha_*} \big)^{-1}  \big\Vert_{\mathcal{B}(L^2(\br^d))} \le C_{d,s} 
\Vert {\bf 1}_K \Vert_{L^{s/2}(\br^d)} r^{-\alpha_*\big(1 - \frac{d}{s}\big)},
$$
which together with \eqref{eq:re3}, yields
\begin{equation}
\label{eq:re4,00}
\big\Vert \big( -\Delta + p^{\alpha}  \big)^{-1} \big( p^{\alpha(x)} - p^{\alpha_*} \big) u_0 \big\Vert_{L^2(\br^d)}  \le C_{d,s}  \Vert {\bf 1}_K \Vert_{L^{s/2}(\br^d)} \Vert u_0 \Vert_{L^2(\br^d)} r^{\alpha_m-\alpha_*\big(1 - \frac{d}{s}\big)}, 
\end{equation}
upon substituting $4C_{d,s}$ for $C_{d,s}$.

We turn now to estimating the second term in the right hand side of \eqref{eq:1}, 
\begin{equation}
\label{eq-a}
\big\Vert \big( -\Delta + p^{\alpha} \big)^{-1} p^{\alpha_*} u_0 \big\Vert_{L^2(\br^d)} = 
\big\Vert \big( -\Delta + p^{\alpha} \big)^{-1} u_0 \big\Vert_{L^2(\br^d)} r^{\alpha_*}. 
\end{equation}
With reference to \eqref{eq:re2} -\eqref{eq:re2,0} we have
$\big\Vert \big( -\Delta + p^{\alpha} \big)^{-1} u_0 \big\Vert_{L^2(\br^d)}  \le 
2 \big\Vert \big( -\Delta + p^{\alpha_*} \big)^{-1} u_0 \big\Vert_{L^2(\br^d)}$, 
so we infer from the estimate
$$
\big\Vert \big( -\Delta + p^{\alpha_*} \big)^{-1} u_0 \big\Vert_{L^2(\br^d)} 
\le \big\Vert \big( -\Delta + p^{\alpha_*} \big)^{-1} \big( -\Delta + r^{\alpha_*} \big) \big\Vert_{\mathcal{B}(L^2(\br^d))} 
\big\Vert \big( -\Delta + r^{\alpha_*} \big)^{-1} u_0 \big\Vert_{L^2(\br^d)}
$$
and \eqref{eq:re07}, that
$$
\big\Vert \big( -\Delta + p^{\alpha} \big)^{-1} u_0 \big\Vert_{L^2(\br^d)} \le 6 \big\Vert \big( -\Delta + r^{\alpha_*} \big)^{-1} u_0 \big\Vert_{L^2(\br^d)}.
$$
From this, \eqref{eq-a} and the Plancherel formula it then follows that
\begin{equation}
\label{eq:re4,02}
\big\Vert \big( -\Delta + p^{\alpha} \big)^{-1} p^{\alpha_*} u_0 \big\Vert_{L^2(\br^d)}
\le 6 \big\Vert \big( \vert \cdot \vert^2 + r^{\alpha_*} \big)^{-1} \widehat{u_0} \big\Vert_{L^2(\br^d)} r^{\alpha_*}.
\end{equation}
We are thus left with the task of estimating $\big\Vert \big( \vert \cdot \vert^2 + r^{\alpha_*} \big)^{-1} \widehat{u_0} \big\Vert_{L^2(\br^d)}$. This can be achieved upon using the fact that
$u_0 \in L^1(\br^d) \cap L^2(\br^d)$, which entails $\widehat{u_0} \in L^2(\br^d) \cap L^\infty(\br^d)$ and consequently $\widehat{u_0} \in L^q(\br^d)$ for 
any $q \ge 2$. More precisely, for all $q > \frac{d}{4}$ and $q'$ such that $\frac{1}{q} + \frac{1}{q'} = 1$, we have 
\begin{eqnarray*}
\big\Vert \big( \vert \cdot \vert^2 + r^{\alpha_*} \big)^{-1} \widehat{u_0} \big\Vert_{L^2(\br^d)} 
&  = & \Bigg( \int_{\br^d} \frac{\vert \widehat{u_0}(\xi) \vert^2}{\big( \vert \xi \vert^2 + r^{\alpha_*} \big)^{2}}  d\xi \Bigg)^{\frac{1}{2}} \\
& \le &  \Bigg( \int_{\br^d} \vert \widehat{u_0}(\xi) \vert^{2q'} d\xi \Bigg)^{\frac{1}{2q'}} \Bigg( \int_{\br^d} \frac{d\xi}{\big( \vert \xi \vert^2 + r^{\alpha_*} \big)^{2q}} \Bigg)^{\frac{1}{2q}} \\
& \le & C_{d,q} \Vert \widehat{u_0} \Vert_{L^{2q'}(\br^d)} r^{ -\alpha_* \frac{4q-d}{4q}},
\end{eqnarray*}
by the H\"older inequality, the constant $C_{d,q}>0$ depending only on $d$ and $q$.
In the particular case where $q = \frac{s}{4}$ and $q' = \frac{s}{s - 4} = s'$, this leads to
$$
\big\Vert \big( \vert \cdot \vert^2 + r^{\alpha_*} \big)^{-1} \widehat{u_0} \big\Vert_{L^2(\br^d)} 
\le C_{d,s} \Vert \widehat{u_0} \Vert_{L^{2s'}(\br^d)} r^{-\alpha_* \frac{s-d}{s}},
$$
which together with \eqref{eq:re4,02}, yield
\begin{equation}
\label{eq:re4,5}
\big\Vert \big( -\Delta + p^{\alpha} \big)^{-1} p^{\alpha_*} u_0 \big\Vert_{L^2(\br^d)} \le C_{d,s} \Vert \widehat{u_0} \Vert_{L^{2s'}(\br^d)} r^{\alpha_* \frac{d}{s}}.
\end{equation}
As $r^{\alpha_m-\alpha_*\big(1 - \frac{d}{s}\big)} > r^{\alpha_* \frac{d}{s}}$ for all $r \in (0,1)$, we deduce \eqref{eq:es2} directly from \eqref{eq:1}, \eqref{eq:re4,00} and \eqref{eq:re4,5}.

Finally the second statement is a byproduct of \eqref{eq:re4,5} with $\alpha(x)=\alpha_*$ for all $x \in \br^d$.
\end{proof}


\section{Proof of Theorem \ref{t1}}

In this proof, $C$ is a generic constant depending only on $d$, $K$, $s$, $\alpha_m$, $\alpha_M$ and $\alpha_*$, that may change from line to line.

We start by showing \eqref{t1a}. To this end, we assume that $u_0\in L^1(\br^d)\cap L^2(\br^d)$. With reference to \eqref{sol} and \eqref{g1}-\eqref{g2}, we set for all $t \in (1,+\infty)$ and a.e. $x \in \br^d$,
\begin{equation}
\label{eq-u}
u^j(t,x):= \frac{1}{2i\pi} \int_{\gamma_j(t^{-1},\theta)} e^{tp} \big( -\Delta + p^{\alpha(x)} \big)^{-1} p^{\alpha(x) - 1} u_0(x) dp,\ j \in \{0, \pm\}.
\end{equation}
Since $p\mapsto \big( -\Delta + p^{\alpha} \big)^{-1} p^{\alpha - 1} u_0$ can be extended to a holomorphic map of $\bc\setminus (-\infty,0]$ into $L^2(\br^d)$, we find upon arguing as in the derivation of \cite[Theorem 1.1]{ksy} that the solution $u$ to \eqref{eq:vp} reads 
\begin{equation}
\label{eq-b}
u=u^-+u^0+u^+.
\end{equation}
Moreover, as we have 
$$\norm{u^0(t,\cdot)}_{L^2(\br^d)}\leq \int_{-\theta}^{\theta} e^{\cos \beta}\left\|\left(-\Delta+\left(t^{-1} e^{i\beta}\right)^{\alpha}\right)^{-1}\left(t^{-1} e^{i\beta}\right)^{\alpha}u_0\right\|_{L^2(\br^d)}d\beta,$$
for all $t \in (1,+\infty)$, we infer from \eqref{eq:es2} that
\begin{equation}
\label{t1d}
\norm{u^0(t,\cdot)}_{L^2(\br^d)}\leq C\left( \Vert u_0 \Vert_{L^2(\br^d)} + \Vert \widehat{u_0} \Vert_{L^{2s'}(\br^d)} \right)t^{-\left(\alpha_m-\alpha_* \left( 1-\frac{d}{s} \right) \right)}.
\end{equation}
Similarly we get for all $t \in (1,+\infty)$ that
\begin{eqnarray}
\norm{u^\pm(t,\cdot)}_{L^2(\br^d)} & \leq &\int_1^{+\infty}e^{tr\cos \theta} \left\|\left( -\Delta + \left(r e^{\pm i\theta}\right)^{\alpha} \right)^{-1} \left(r e^{\pm i\theta}\right)^{\alpha - 1} u_0\right\|_{L^2(\br^d)}dr \nonumber\\
& &+\int_{t^{-1}}^{1}e^{tr\cos \theta} \left\|\left( -\Delta + \left(r e^{\pm i\theta}\right)^{\alpha} \right)^{-1} \left(r e^{\pm i\theta}\right)^{\alpha - 1} u_0\right\|_{L^2(\br^d)}dr.\label{t1e}
\end{eqnarray}
The first term in the right hand side of \eqref{t1e} is treated by \eqref{eq:es3}:
\begin{eqnarray}
& & \int_1^{+\infty}e^{tr\cos \theta} \left\|\left( -\Delta + \left(r e^{\pm i\theta}\right)^{\alpha} \right)^{-1} \left(r e^{\pm i\theta}\right)^{\alpha - 1} u_0\right\|_{L^2(\br^d)}dr \nonumber \\
&\leq & C \left( \int_1^{+\infty}e^{tr\cos \theta} dr \right) \|u_0\|_{L^2(\br^d)} \nonumber \\
&\leq & C t^{-1} \|u_0\|_{L^2(\br^d)} \label{t1f}
\end{eqnarray}
For the second term, we apply \eqref{eq:es2}, getting for all $t \in (1,+\infty)$:
\begin{eqnarray*}
& & \int_{t^{-1}}^{1}e^{tr\cos \theta} \left\|\left( -\Delta + \left(r e^{\pm i\theta}\right)^{\alpha} \right)^{-1} \left(r e^{\pm i\theta}\right)^{\alpha - 1} u_0 \right\|_{L^2(\br^d)}dr  \\
& \leq &C\Big( \Vert u_0 \Vert_{L^2(\br^d)} + \Vert \widehat{u_0} \Vert_{L^{2s'}(\br^d)}\Big)  \left(  \int_{t^{-1}}^{1}e^{tr\cos \theta} r^{\alpha_0-1-\alpha_* \left(1-\frac{d}{s} \right)}dr \right)\\
&\leq & C\Big( \Vert u_0 \Vert_{L^2(\br^d)} + \Vert \widehat{u_0} \Vert_{L^{2s'}(\br^d)}\Big) \left( \int_{t^{-1}}^{+\infty}e^{tr\cos \theta} r^{\alpha_m-1-\alpha_* \left(1-\frac{d}{s}\right)}dr \right)\\
& \leq & C\Big( \Vert u_0 \Vert_{L^2(\br^d)} + \Vert \widehat{u_0} \Vert_{L^{2s'}(\br^d)}\Big) \left( t^{-1} \int_{1}^{+\infty}e^{\tau\cos \theta} \left(\frac{\tau}{t}\right)^{\alpha_m-1-\alpha_* \left(1-\frac{d}{s}\right)} d\tau \right)\\
&\leq & C\Big( \Vert u_0 \Vert_{L^2(\br^d)} + \Vert \widehat{u_0} \Vert_{L^{2s'}(\br^d)}\Big) \left( t^{-\left(\alpha_m-\alpha_* \left(1-\frac{d}{s}\right) \right)} \right).
\end{eqnarray*}
Putting this together with \eqref{t1e}-\eqref{t1f}, we obtain 
$$\norm{u^\pm(t,\cdot)}_{L^2(\br^d)}\leq C\Big( \Vert u_0 \Vert_{L^2(\br^d)} + \Vert \widehat{u_0} \Vert_{L^{2s'}(\br^d)} \Big)t^{-\left(\alpha_m-\alpha_* \left(1-\frac{d}{s}\right)\right)},\ t \in (1,+\infty), $$
and hence
$$\norm{u(t,\cdot)}_{L^2(\br^d)}\leq C\Big( \Vert u_0 \Vert_{L^2(\br^d)} + \Vert \widehat{u_0} \Vert_{L^{2s'}(\br^d)} \Big)t^{-\left( \alpha_m-\alpha_* \left(1-\frac{d}{s}\right)\right)},\ t \in (1,+\infty),
$$
by \eqref{eq-b}-\eqref{t1d}. Since the Fourier transform can be extended to a bounded operator from $L^{\frac{2s'}{2s'-1}}(\br^d)$ into $L^{2s'}(\br^d)$ by the Marcinkiewicz interpolation theorem (see e.g. \cite[Appendix B]{St}), we get \eqref{t1a} for $u_0 \in L^1(\br^d) \cap L^2(\br^d)$, and finally for all $u_0\in L^{\frac{2s'}{2s'-1}}(\br^d)\cap L^2(\br^d)$, by a density argument.

We turn now to proving \eqref{t1b}. We proceed as in the derivation of \eqref{t1a}: for all $t \in (0,1]$ we define $u^j(t,\cdot)$, $j \in \{ \pm, 0 \}$, as in \eqref{eq-u}, and we check that \eqref{eq-b} remains valid. Next we apply \eqref{eq:es3} and get for all $t \in (0,1]$ that
\begin{eqnarray*}
\norm{u^0(t,\cdot)}_{L^2(\br^d)} & \leq & \int_{-\theta}^{\theta} e^{\cos \beta}\left\|\left(-\Delta+\left(t^{-1} e^{i\beta}\right)^{\alpha}\right)^{-1}\left(t^{-1} e^{i\beta}\right)^{\alpha}u_0\right\|_{L^2(\br^d)}d\beta\\
& \leq & C (t^{-1})^{-\alpha_m}(t^{-1})^{\alpha_M}\|u_0\|_{L^2(\br^d)}\\
&\leq & C \|u_0\|_{L^2(\br^d)}t^{\alpha_m-\alpha_M}.
\end{eqnarray*}
Similarly, for each $t \in (0,1]$ we have 
\begin{eqnarray*}
\norm{u^\pm(t,\cdot)}_{L^2(\br^d)}&\leq & \int_{t^{-1}}^{+\infty}e^{tr\cos \theta } \left\|\left( -\Delta + \left(r e^{\pm i\theta}\right)^{\alpha} \right)^{-1} \left(r e^{\pm i\theta}\right)^{\alpha - 1} u_0\right\|_{L^2(\br^d)}dr \\
& \leq & C\|u_0\|_{L^2(\br^d)}\left(\int_{t^{-1}}^{+\infty} e^{tr\cos \theta} r^{-\alpha_m} r^{\alpha_M-1}dr\right)\\
& \leq & C\|u_0\|_{L^2(\br^d)}\left(\int_{0}^{+\infty}e^{tr\cos \theta} r^{\alpha_M-\alpha_m-1}dr\right)\\
&\leq & C\|u_0\|_{L^2(\br^d)}\left(t^{-1} \int_{0}^{+\infty}e^{\tau\cos\theta} \left(\frac{\tau}{t}\right)^{\alpha_M-\alpha_m-1}d\tau\right)\\
&\leq & C \|u_0\|_{L^2(\br^d)}t^{\alpha_m-\alpha_M},
\end{eqnarray*}
so we immediately get \eqref{t1b} from the two above estimates and \eqref{eq-b}. 

\section*{Acknowledgments}
 The work of YK and ES was partially supported by the Agence Nationale de la Recherche under grant ANR-17-CE40-0029.

\end{document}